\documentclass[11pt]{amsart}
\usepackage{adjustbox, amssymb, amsfonts, amsmath, bbm, bm, mathrsfs, mathtools, pgfplots, physics, stackrel, tikz, tikz-cd,  todonotes,  verbatim, xypic}
\usetikzlibrary{intersections, calc, matrix, decorations.pathreplacing}
\usepackage[shortlabels]{enumitem}
\usepackage[bbgreekl]{mathbbol}
\DeclareMathSymbol\bbDelta  \mathord{bbold}{"01}

\definecolor{internalremarkcolor}{rgb}{1.0, 0.71, 0.76}

\usepackage[outline]{contour} \contourlength{2pt}
\usepackage{hyperref}
\hypersetup{
    colorlinks=false, 
    linktoc=all,     
    linkcolor=blue,  
}

\pgfplotsset{compat=newest}

\usepackage{geometry}
\allowdisplaybreaks

\theoremstyle{definition}
\newtheorem{theorem}{Theorem}[section]
\newtheorem{lemma}[theorem]{Lemma}

\newtheorem{proposition}[theorem]{Proposition}
\newtheorem{definition}[theorem]{Definition}

\numberwithin{equation}{section}
\numberwithin{figure}{section}

\newcommand{\Fin}{\text{Fin}}
\newcommand{\WFin}{\Omega\text{Fin}}
\newcommand{\catWFin}{\mathbf{\Omega}\textbf{Fin}}
\newcommand{\sSet}{\textbf{sSet}}

\newcommand\subsetsim{\mathrel{%
  \ooalign{\raise0.2ex\hbox{$\subset$}\cr\hidewidth\raise-0.8ex\hbox{\scalebox{0.9}{$\sim$}}\hidewidth\cr}}}

\newcommand{\del}{ \partial}

\newcommand{\Cech}{\check{C}}

\newcommand{\Ch}{\mathbf{Ch}_{\bullet}}

\DeclareFontFamily{U}{dmjhira}{}
\DeclareFontShape{U}{dmjhira}{m}{n}{
  <-> s*[0.95] dmjhira
}{}
\DeclareFontSubstitution{U}{dmjhira}{m}{n}

\DeclareFontFamily{OT1}{pzc}{}
\DeclareFontShape{OT1}{pzc}{m}{it}{<-> s * [1.10] pzcmi7t}{}
\DeclareMathAlphabet{\mathpzc}{OT1}{pzc}{m}{it}

\newcommand\restr[2]{{
  \left.\kern-\nulldelimiterspace 
  #1 
  \vphantom{\big|} 
  \right|_{#2} 
  }}

\author[C. Glass]{Cheyne Glass}
\address{Cheyne Glass, State University of New York at New Paltz, Department of Mathematics, 1 Hawk Dr., New Paltz, NY 12561}
  \email{glassc@newpaltz.edu}

\title[Homotopy-theoretic least squares regression]{Homotopy-theoretic least squares regression}

\begin{document}
\maketitle

\begin{abstract}
A presheaf of complexes is constructed on a category of weighted finite subsets of a fixed Euclidean space. To each object, a Koszul complex is assigned which resolves the coordinate ring of least squares solutions on that data set for a choice of particular model (ie ``y=mx+b''). In order to obtain a total \v{C}ech-theoretic complex where the $0$-cocycles resemble locally defined least squares solutions gluing together up to homotopy, the coefficient rings for the Koszul complexes over each subset are linearized near a least squares solution. While these new linearized complexes do not immediately assemble into a presheaf, additional change-of-coordinates maps restore functoriality. Evaluating this new presheaf of complexes on a cover, its total-degree-0-cocycles of this \v{C}ech-Koszul bicomplex reveals (higher) homotopies between the discrepancies of least squares solutions on (higher) overlaps. A toy example with 5 data points is worked out in full elementary detail. 
\end{abstract}

\tableofcontents
\section*{Introduction}
The purpose of this short note is to offer some constructions from algebraic topology/geometry which I hope can find some traction in the world of applied sheaf theory. Sheaf theory in mathematics, especially in questions related to the physical/analog world, has been implicitly baked-in to many constructions over the last century. The philosophy is fairly reasonable: instead of looking for global solutions, just find local solutions which ``glue'' together appropriately. From there, the ideas of ``glueing up to homotopy'' naturally arise and presentations of these ``infinity sheaves'' take the form of $dg$-presheaves, presheaves of complexes, simplicial presheaves, etc. It is surprising that a theory of ``regression analysis up to homotopy'' has seemingly not been explored, given all of the theoretical tools were sitting there. Presumably just as in the physical world, such a theory would likely add to accuracy in predictions. There is no claim that in this note the reader will find a perfected, ready-to-implement algorithm which will change the world of regression analysis. Instead, the idea is simply to offer a potential path forward for those with more expertise in applied settings to use these infinity sheaf tools.  

In \cite{GZ} and \cite{GTZ} we took many of the constructions from \cite{TT} and pushed them into statements on the level of infinity stacks. In those particular works, \v{C}ech-theoretic cocycles are constructed by locally resolving the diagonal in $\mathbb{C}^n \times \mathbb{C}^n$ using Koszul complexes in a fairly natural way, and then gluing the resolutions together with chain maps and homotopies to account for discrepancies. If we think of least squares solutions as playing the role of the diagonal here, this note aims to repeat that story but now the \v{C}ech-theoretic cocycles are potentially homotopy-theoretic regression models. The conclusion is that locally defined least squares solutions $\mathfrak{a}_i$ can be encoded as linearized functions $\mathfrak{a}_i^T\cdot (a - \mathfrak{a}_i$ in Koszul degree zero, and their discrepancies on overlaps are witnessed by homotopies $\beta_{ij}$ in Koszul degree $1$. Of course, the discrepancies of these $\beta_{ij}$ on triple overlaps can then be witnessed by homotopies in degree $2$ and so on.

In the first section the presheaf of least squares Koszul complexes is constructed on the category of weighted finite data sets. The second section then deals with techniques of modding out by the local least squares solutions and offers a homotopy theoretic model in the form of a $0$-cocycle. The final section works out a toy example so that computability is more believable. 

\textbf{Computer Assistance.} In this project, as a first for me, I used ChatGPT-4 and ChatGPT-5 as a sounding board, to expedite some choices (ie presentation of constructions), and to save time checking routine examples. All proofs, computations, constructions, and phrasing that actually made it into this paper were my own.

\textbf{Acknowledgements.} I would like to thank Mahmoud Zeinalian for sharing the works of Toledo and Tong with me, as their constructions inspired everything in this paper. I would also like to thank Michael Robinson for encouraging conversations with regards to applications of sheaves of complexes.

\section{The least-squares Koszul presheaf}

The simplest case is used to setup some notation and definitions. Suppose an \emph{ambient space} $X\times Y = \{(x,y) \in \mathbb{R}^2\}$ is fixed to define a set $\Fin$ of finite \emph{data sets} $\mathscr{D}=\{(x^1, y^1), \ldots, (x^m, y^m)\}$, and the ``line of best fit'', $y = mx + b$, is the chosen model. Writing $\phi(x):= (x,1)^T$ and $a:= (m,b)^T$ then the space of parameters $\Lambda = \{ a= (b,m) \in \mathbb{R}^2\}$ and the above choice of \emph{model} is really a function (see for example \cite[chapters 1 and 6]{Va})
\begin{align*}
f: X \times \Lambda &\to Y = \mathbb{R}\\
(x, a) &\mapsto  \phi(x) \cdot a 
\end{align*}
which by definition is linear in $\Lambda$. Next, choosing the \emph{squared error loss function} $L(y, f(x,a)):= \left( y - f(x,a)\right)^2$ induces the \emph{squared error loss functional} $\Sigma {L}:\Fin X \times \Lambda \to Y$, which assigns to each $\mathscr{D} \in \Fin X$ and $a \in \Lambda$, the sum of all $L(y^i, f(x^i,a))$ over $(x^i, y^i) \in \mathscr{D}$. The goal in classical \emph{least squares (LS) regression} is to find $\mathfrak{a} \in \Lambda$ which minimizes $\Sigma {L}_{\mathscr{D}}:= \Sigma {L}(\mathscr{D}, -)$.

For a fixed data set $\mathscr{D}$, note then that when $f$ is linear in $\Lambda$, the induced functional $\Sigma {L}_{\mathscr{D}}: \Lambda \to Y$ is smooth so minimizing $\Sigma {L}_{\mathscr{D}}$ occurs when the gradient $\nabla \left(\Sigma {L}_{\mathscr{D}}\right)$ is zero. The induced equations for the components of this gradient, are referred to as the \emph{normal equations}. For example when $f$ is given by ``$y=mx+b$'' the normal equations are
\begin{align}
0&=\frac{\partial}{\partial m}\Sigma {L}_{\mathscr{D}} = \frac{\partial}{\partial m}\sum\limits_{i=1}^m \left( y^i - (mx^i + b)\right)^2= -2\sum\limits_{i=1}^m \left( y^i - (mx^i + b)\right)\cdot x^i \nonumber\\
\Leftrightarrow0&=  -2\sum\limits_{i=1}^m \left( y^i - (mx^i + b)\right)\cdot x^i \label{EQ LS normal m}\\
\text{and} \quad 0&=\frac{\partial}{\partial b}\Sigma {L}_{\mathscr{D}} = \frac{\partial}{\partial b}\sum\limits_{i=1}^m \left( y^i - (mx^i + b)\right)^2= -2\sum\limits_{i=1}^m \left( y^i - (mx^i + b)\right)\nonumber\\
\Leftrightarrow0&=  -2\sum\limits_{i=1}^m \left( y^i - (mx^i + b)\right).\label{EQ LS normal b}
\end{align}
When evaluated at a data set $\mathcal{D}$, these normal equations do not exhibit the right functoriality for later constructions. Instead, weights \cite[section 14.5.1]{Bishop} can be introduced to keep track of which points in the data set are being considered. Since the finite subsets of $X\times Y$ being used here are additionally ordered, they are equivalent to functions $(x,y):[n]=\{1, \ldots, n\}\to X\times Y$. So a \emph{weighted finite subset} will be a function $(\omega, x,y ): [n] \to X \times Y\times \mathbb{R}$ with \emph{weights} $\omega^i:= \omega(i)$. Fixing an ambient space $X\times Y:= \mathbb{R}^{N+1}$, the set of all weighted\footnote{It seems likely that the use of weights in the rest of the paper can instead be replaced by choices of density functions $\omega: X \to \mathbb{R}$, whereby whenever considering a subset you assign zero for all points outside the subset. However, weights felt a little more standard and the difference didn't feel meaningful.} finite subsets, letting the index set vary, will be written $\WFin$.  Now define a \emph{weighted loss functional} $\mathcal{L}: \WFin \times \Lambda \to \mathbb{R} $ which assigns to each weighted finite set $\omega\mathscr{D}= \{(\omega^i, x^i, y^i)\}_{i=1}^m \in \WFin$ and parameter $a \in \Lambda$, the sum of all $\omega^i \cdot L(y^i, f(x^i,a))$ over $(x^i, y^i) \in \mathscr{D}$. 

In the example of ``$y=mx+b$'' the non-weighted normal equations from \eqref{EQ LS normal m} and \eqref{EQ LS normal b}  become 
\begin{equation}\label{EQ weighted LS mx+b}
0= -2 \sum\limits_{i=1}^m \left( y^i - (mx^i + b)\right)\cdot  \omega^i x^i \quad \text{ and  } \quad  0=  -2\sum\limits_{i=1}^m \left( y^i - (mx^i + b)\right)\cdot \omega^i.
\end{equation}
Under reasonable assumptions the general form is quite similar (see lemma \ref{LEM eta linear}).

These normal equations will help produce a chain complex using the well-known Koszul complex \cite[chapter 17]{Ei} construction. One construction for a Koszul complex starts with choosing a ring $\mathcal{R}$, a rank $n$, and $n$-many elements $\eta^i \in \mathcal{R}$. In each degree $p=0, \ldots, n$ the $\mathcal{R}$-module $K_p(\mathcal{R})$ is defined as the exterior algebra of the $n$-dimensional free $\mathcal{R}$-module. In other words, writing the basis of this module as $\{ e^1, \ldots, e^n\}$,
\begin{equation}\label{EQ Koszul p}
K_p :=\left(  \bigoplus\limits_{i=1}^n \mathcal{R} \cdot  e^i \right)^{\otimes p} / \langle \left\{e^i \otimes e^i \vert i = 1, \ldots, n \right\}  \rangle
\end{equation}
with the quotient sending ``$\otimes$'' to ``$\wedge$''. The differential $\iota: K_{p+1} \to K_p$ is then defined via interior multiplication by $\mathcal{N}:= \sum\limits_{i=1}^n \eta^i \cdot e^i$  in $K_1(\mathcal{R})$,
\begin{equation}\label{EQ Koszul d}
\iota \left(e^{i_0} \cdots  e^{i_p}\right):= \sum\limits_{j=0}^p (-1)^j\eta^j \cdot e^{i_0} \cdots \widehat{e^{i_j}}  \cdots  e^{i_p}.
\end{equation}

For the rest of this section, fix: 
\begin{itemize} 
\item an \emph{ambient space} $X\times Y = \{ (x^1, \ldots, x^N,y) \in \mathbb{R}^{N+1}\}$, 
\item a \emph{parameter space} $\Lambda =\{a= (a^1, \ldots, a^n) \in \mathbb{R}^n\}$, 
\item a \emph{model} $f :X \times \Lambda \to Y = \mathbb{R}$, linear in $\Lambda$, 
\item the \emph{weighted LS loss functional} $\mathcal{L}: \WFin  \times \Lambda \to \mathbb{R}$, given by 
\begin{equation}
\mathcal{L}:\left(\omega\mathscr{D}, a \right) \mapsto \sum\limits_{i=1}^m (y^i- f(x^i, a))^2\cdot \omega^i,
\end{equation}
\item and, for each weighted finite data set $\omega\mathscr{D}$, the polynomial ring $\mathcal{R}^{\omega\mathscr{D}}:= \mathbb{R}[a, \omega]$ where the $\omega^i$ are \emph{weight variables} for data points in $\mathscr{D}$.
\end{itemize}

\begin{definition}\label{DEF big koszul}
For a finite data set $\mathscr{D}\subset \mathbb{R}^N$, the \emph{LS Koszul complex} $\left(K_{\bullet}(\mathcal{R}^{\omega\mathscr{D}}), \iota_{\omega\mathcal{D}} \right)$ is the complex defined above using the ring $\mathcal{R}^{\omega\mathscr{D}}$, the rank $n$, and $\eta^i:=\nabla\left(  \mathcal{L}_{\omega\mathscr{D}}\right)^{a^i}$ from \eqref{EQ gradient components weighted}.
\end{definition}
Linearity of the model with respect to parameters $a$ means that the normal equation components have a particularly useful form.
\begin{lemma}\label{LEM eta linear}
If $f(x, a) = \phi(x) \cdot a $ is linear in $\Lambda$ then $\eta^k= \nu^k(\omega) + N^k(\omega) \cdot a$ where $\nu^k$ and $N^{k, \bullet}$ are the scalar and vector-valued functions on $\omega$ respectively given by 
\begin{equation}\label{EQ gradient components weighted}
 \nu^k(\omega) := -2 \sum\limits_{j=1}^m y^j \cdot \phi(x^j)^{k} \omega^j \quad \text{and} \quad N^{k, \ell}(\omega):= -2 \sum\limits_{j=1}^m   \phi(x^j)^{k} \omega^j  \phi(x^j)^{\ell}.
\end{equation}
\begin{proof}
\begin{align*}
 \nabla  \left( \mathcal{L}_{\omega\mathscr{D}}\right)^{a^k} &= \frac{\partial }{\partial a^k} \sum\limits_{j=1}^m\left( y^j -   f(x^j, a^{\bullet})\right)^2\cdot \omega^j =\frac{\partial }{\partial a^k} \sum\limits_{j=1}^m\left( y^j -  \sum\limits_{\ell=1}^n \phi(x^j)^{\ell} \cdot a^{\ell} \right)^2\cdot \omega^j \\
& =-2 \sum\limits_{j=1}^m\left( y^j -  \sum\limits_{\ell=1}^n \phi(x^j)^{\ell} \cdot a^{\ell} \right)\cdot \phi(x^j)^{k} \omega^j \\
& =-2 \sum\limits_{j=1}^m y^j \cdot \phi(x^j)^{k} \omega^j -2 \sum\limits_{j=1}^m   \phi(x^j)^{k} \omega^j \sum\limits_{\ell=1}^n \phi(x^j)^{\ell} \cdot a^{\ell}\\ 
&= -2 \sum\limits_{j=1}^m y^j \cdot \phi(x^j)^{k} \omega^j -2\sum\limits_{\ell=1}^n \left(\sum\limits_{j=1}^m   \phi(x^j)^{k} \omega^j  \phi(x^j)^{\ell}\right) \cdot a^{\ell}. 
\end{align*}
\end{proof}
\end{lemma}

Weighted finite subsets form a category $\catWFin$, which forgets to the poset category on finite subsets. Objects in $\catWFin$ are weighted finite subsets $\omega\mathscr{D}= \omega \times x: [m] \to X \times \mathbb{R}$ and morphisms $\rho: \omega\mathscr{D} \to  \omega\mathscr{D}'$ are given by injections $\rho: [m] \hookrightarrow [m']$ satisfying $(\omega \times x) \circ \rho =\omega' \times x'$. The point of this construction is that the assignment of the polynomial ring $\mathcal{R}^{\omega\mathscr{D}}$ to each weighted data set $\omega\mathscr{D}$ forms a presheaf of rings after defining the pullback ring morphism
\begin{align}
\mathcal{R}^{\omega\mathscr{D}'} &\xrightarrow{\rho^*} \mathcal{R}^{\omega\mathscr{D}}\label{EQ ring map restrict}\\
a^i&\mapsto a^i \quad \text{ for } i = 1, \ldots, n\nonumber\\
\omega^{j'}&\mapsto \begin{cases}
\omega^j & \text{ if } j' = \rho(j)\\
0 & \text{ if } j' \notin \rho([m]).
\end{cases}\nonumber
\end{align}
For example given an inclusion of data sets $\{x^1, x^3\} \subset \{x^1, x^2, x^3, x^4\}$, then a polynomial $p(a^{\bullet}, \omega^1, \omega^2, \omega^3, \omega^4)$ will be sent to $p(a^{\bullet}, \omega^1,0, \omega^3 ,0)$ thought of as an element in $\mathbb{R}[a; \omega^1, \omega^2, \omega^3,\omega^4] / (\omega^2, \omega^4) \cong \mathbb{R}[a; \omega^1,\omega^3].$ 
\begin{theorem}
The assignment of the LS-Koszul complex $K_{\bullet}( \mathcal{R}^{\omega \mathscr{D}})$ to each weighted finite data set induces a presheaf of chain complexes $\mathcal{K}: \catWFin^{op} \to \Ch$, via the presheaf of rings described in \eqref{EQ ring map restrict}.
\begin{proof}
The Koszul complex construction is covariant with respect to ring morphisms, provided that the ring morphism preserves the differential. But the components of the differential \eqref{EQ gradient components weighted} are sums of quantities which vanish when $\omega^i$ is sent to zero. In this way the ring morphism \eqref{EQ ring map restrict} induces a chain map $\mathcal{K}(\rho):K_{\bullet}( \mathcal{R}^{\omega \mathscr{D}'})\to K_{\bullet}( \mathcal{R}^{\omega \mathscr{D}})$. 
\end{proof}
\end{theorem}

When the Koszul complex is constructed using a regular sequence $\eta^1, \ldots, \eta^n$ of ring elements, then the complex resolves the quotient ring $\mathcal{R}^{\omega \mathscr{D}}/(\eta^1, \ldots, \eta^n)$. In the case of $K_{\bullet}( \mathcal{R}^{\omega \mathscr{D}})$ then $H_0$ computes the coordinate ring generated by solutions which vanish on the normal equations. Of course, up to evaluating the weights at $\omega^i=1$, such solutions are precisely the least squares solutions. 

Let a \emph{cover} of a weighted finite data set $\omega\mathscr{D}$ be a collection $\mathcal{U} = \left\{ \omega U_i \xrightarrow{\rho_i} \omega\mathscr{D}\right\}_{i \in I}$ of morphisms so that the union of the images of the $x_i:[m_i] \to X$ equal $\mathscr{D}$. It is straightforward to evaluate the presheaf $\mathcal{K}$ on $\mathcal{U}$ to form a \v{C}ech-Koszul bicomplex $\Cech^{\bullet}\left(\mathcal{K}(\mathcal{U}) \right)$. However, its $0$-cocycles don't seem to capture the information I was looking for. Instead the following section builds a complex where the total $0$-cocycles offer a model for a homotopy theoretic regression model. That being said, it would not be surprising if someone more knowledgable found the above \v{C}ech-Koszul bicomplex to be sufficiently interesting to work with.

\section{Homotopy theoretic least squares solutions}
In this section, the rings $\mathcal{R}^{\omega \mathscr{D}}$ are linearized \cite[section 16]{Ei} near some least squares solution so that the induced \v{C}ech-Koszul total complex provides the kind of homotopic discrepancies between local least squares solutions that I am looking for. There are many choices in this section and I believe that each of them can be explored to provide different levels of control over these homotopies. The choice being made in this paper is to turn a least squares solution into an ideal $\mathfrak{I}$ of $\mathcal{R}^{\omega \mathscr{D}}$ and then work with the Koszul complex over $\mathcal{R}^{\omega \mathscr{D}}/\mathfrak{I}^2$. Just as is pointed out in \cite[section 9]{TT}, working$\mod \mathfrak{I}^2$ has the effect of strictifying lots of the data in the \v{C}ech direction. If instead this section worked modulo $\mathfrak{I}^3$ or higher, then higher terms could appear (especially if the normal equations for a given model are no longer affine in parameters $a$) and instead of constructing a total complex coming from a bi-complex, this paper would construct a total complex coming from a twisted complex. All of the constructions appear to still work, but for the sake of an initial clean construction with potentially immediate applications, the choice to work only with the first-order information is made. 

\begin{definition}
Given a weighted data set $\omega \mathscr{D}$ and a choice of parameters $\mathfrak{a} \in \Lambda$ define the ring 
\begin{equation}
\mathcal{R}^{\omega \mathscr{D}}_{\mathfrak{a}}:=\mathcal{R}^{\omega \mathscr{D}}/{\mathfrak{I}^2_\mathfrak{a}} .
\end{equation}
where $\mathfrak{I}_\mathfrak{a} \subset \mathcal{R}^{\omega \mathscr{D}}$ is the ideal generated by $(a^i - \mathfrak{a}^i)$ for $i=1, \ldots, n$.
\end{definition}
The canonical map $q_{\mathfrak{a}}: \mathcal{R}^{\omega \mathscr{D}}\to \mathcal{R}^{\omega \mathscr{D}}_{\mathfrak{a}}$ is given by 
\begin{equation}\label{EQ linearize polyn}
\left(q_{\mathfrak{a}}f\right)(a, \omega) \equiv f(\mathfrak{a}, \omega) + \sum\limits_{i=1}^n \frac{\partial f}{\partial a_i}(\mathfrak{a}, \omega) \cdot ( a_i - \mathfrak{a}_i) \mod \mathfrak{I}_{\mathfrak{a}}^2.
\end{equation}
Degree-wise, $q_{\mathfrak{a}}$ provides natural maps $\left(q_{\mathfrak{a}}\right)_*:K_{p}\left( \mathcal{R}^{\omega \mathscr{D}} \right) \to K_{p}\left( \mathcal{R}^{\omega \mathscr{D}}_{\mathfrak{a}} \right)$, and so applying \eqref{EQ linearize polyn} to the sequence $\left(\eta^i\right)_{i=1}^n$ given by the components of the partial\footnote{The gradient being used is only with respect to the $a^i$ variables, not the $\omega^i$ variables} gradient from definition \ref{DEF big koszul} yields,
\begin{align*}
\left(q_{\mathfrak{a}}(\eta)^i\right)(a, \omega)&=\eta^i(\mathfrak{a}, \omega) +   \sum\limits_{j=1}^n \frac{\partial \eta^i}{\partial a_j}(\mathfrak{a}, \omega) \cdot ( a_j - \mathfrak{a}_j) \mod \mathfrak{I}^2\\
\intertext{and by applying lemma \ref{LEM eta linear}}
&=\eta^i(\mathfrak{a}, \omega) +   N^i(\omega) \cdot ( a- \mathfrak{a}) \mod \mathfrak{I}^2.
\end{align*}
Note that $\mathfrak{a}$ is a LS solution then $\eta^i(\mathfrak{a}, \omega)  = 0$ and so  $\left(q_{\mathfrak{a}}(\eta)^i\right)(a, \omega)= N^i(\omega) \cdot ( a- \mathfrak{a}) \mod \mathfrak{I}^2$. 

\begin{definition}\label{DEF local Koszul}
For a finite data set $\mathscr{D}\subset \mathbb{R}^N$ and choice of least squares solution $\mathfrak{a}$, the \emph{LS-Koszul complex linearized near $\mathfrak{a}$}, written $\left(K_{\bullet}(\mathcal{R}^{\omega\mathscr{D}}_{\mathfrak{a}}), \iota_{\omega\mathcal{D}_{\mathfrak{a}}} \right)$, is the Koszul complex defined around \eqref{EQ Koszul p} using the ring $\mathcal{R}^{\omega\mathscr{D}}_{\mathfrak{a}}$, the rank $n$, and $\eta^i_{\mathfrak{a}}(a, \omega):=N^i(\omega) \cdot (a^{\bullet} - \mathfrak{a}^{\bullet})$.
\end{definition}
Note that this is also just a sort of partial Hessian of the least squares function. The above construction is such that the following chain maps are for free.
\begin{proposition}
The quotient map $q_{\mathfrak{a}}$ from linearizing $\mathcal{R}^{\omega\mathscr{D}}$ near a least squares solution $\mathfrak{a}$ induces a chain map of Koszul complexes $\left(q_{\mathfrak{a}}\right)_*: K_{\bullet}(\mathcal{R}^{\omega\mathscr{D}}) \to K_{\bullet}(\mathcal{R}^{\omega\mathscr{D}}_{\mathfrak{a}})$.
\end{proposition}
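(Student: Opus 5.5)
The plan is to reduce the statement to the general functoriality of the Koszul construction under ring homomorphisms, as in the proof of the earlier theorem on the presheaf $\mathcal{K}$. A ring map $\varphi:\mathcal{R}\to\mathcal{S}$ induces degree-wise maps $\varphi_*:K_p(\mathcal{R})\to K_p(\mathcal{S})$ by applying $\varphi$ to coefficients in the basis $e^{i_1}\wedge\cdots\wedge e^{i_p}$. These maps are $\varphi$-semilinear and respect the wedge basis. Comparing formula \eqref{EQ Koszul d} on both sides gives
\[
\iota_{\mathcal{S}}\circ\varphi_* \;-\; \varphi_*\circ\iota_{\mathcal{R}} \;=\; \text{(interior multiplication by } \textstyle\sum_i (\eta^i_{\mathcal{S}}-\varphi(\eta^i_{\mathcal{R}}))\,e^i).
\]
So $\varphi_*$ is a chain map as soon as $\varphi(\eta^i_{\mathcal{R}})=\eta^i_{\mathcal{S}}$ for every $i$. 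I will state this once, noting that $\iota$ is a derivation of the exterior algebra determined by its values on the $e^i$, and that $\varphi_*$ is an algebra map on exterior algebras.

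Next I apply this with $\varphi=q_{\mathfrak{a}}$, $\eta^i_{\mathcal{R}}=\eta^i=\nabla(\mathcal{L}_{\omega\mathscr{D}})^{a^i}$ from Definition~\ref{DEF big koszul}, and $\eta^i_{\mathcal{S}}=\eta^i_{\mathfrak{a}}=N^i(\omega)\cdot(a-\mathfrak{a})$ from Definition~\ref{DEF local Koszul}. The required identity $q_{\mathfrak{a}}(\eta^i)=\eta^i_{\mathfrak{a}}$ in $\mathcal{R}^{\omega\mathscr{D}}_{\mathfrak{a}}$ is essentially the computation just before Definition~\ref{DEF local Koszul}. By Lemma~\ref{LEM eta linear}, $\eta^i=\nu^i(\omega)+N^i(\omega)\cdot a$ is affine in $a$, so the Taylor expansion \eqref{EQ linearize polyn} is exact and has no quadratic remainder. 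This gives
\[
\eta^i=\eta^i(\mathfrak{a},\omega)+N^i(\omega)\cdot(a-\mathfrak{a})
\]
already in $\mathcal{R}^{\omega\mathscr{D}}$, and a fortiori modulo $\mathfrak{I}_{\mathfrak{a}}^2$.

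The main obstacle is the constant term $\eta^i(\mathfrak{a},\omega)=\nu^i(\omega)+N^i(\omega)\cdot\mathfrak{a}$. It has to vanish as a polynomial in the weight variables $\omega$, not merely after setting $\omega^j=1$. I will interpret ``$\mathfrak{a}$ is a least squares solution'' as $\eta^i(\mathfrak{a},\omega)=0$ in $\mathcal{R}^{\omega\mathscr{D}}$, which is how the paper uses it in the remark preceding Definition~\ref{DEF local Koszul}. Since $\eta^i(\mathfrak{a},\omega)$ is linear in $\omega$ with coefficient $-2\,(y^j-\phi(x^j)\cdot\mathfrak{a})\,\phi(x^j)^i$ on $\omega^j$, this is a genuine hypothesis. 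It is the one place where care is needed.

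If one only assumes the classical condition at $\omega^j=1$, this step fails. In that case I would either regard $\omega$ as specialized, or note that the constant terms differ by an element $c^i$ of the coefficient ring. Then $q_{\mathfrak{a}*}$ intertwines the differentials only up to interior multiplication by $\sum_i c^i e^i$, so it is not a chain map. Granting the hypothesis, the equality $q_{\mathfrak{a}}(\eta^i)=\eta^i_{\mathfrak{a}}$ holds, and the general principle from the first paragraph finishes the proof.
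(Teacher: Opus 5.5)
Your proof is correct and follows the paper's own implicit argument. The paper gives no separate proof; it relies on the Koszul functoriality used for $\mathcal{K}$, together with $q_{\mathfrak{a}}(\eta^i)=\eta^i(\mathfrak{a},\omega)+N^i(\omega)\cdot(a-\mathfrak{a})$ from Lemma~\ref{LEM eta linear} and the remark that $\eta^i(\mathfrak{a},\omega)=0$ for a least squares solution. Your caveat about that remark is justified, and the problem is worse than you state: the coefficient of $\omega^j$ is $-2(y^j-\phi(x^j)\cdot\mathfrak{a})\phi(x^j)^i$, so vanishing identically in $\omega$ forces every residual to be zero whenever $\phi$ has a nowhere-zero component (e.g.\ the intercept in $y=mx+b$); already $\mathfrak{a}_{12}$ in the toy example fails it, and for such genuine least squares solutions one must specialize the weights as you suggest, which is effectively what the paper does there by setting $\omega^j=1$.
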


Different choices of least square solutions are not necessarily compatible with the restriction maps for the Koszul presheaf. In particular, consider a morphism $\omega U \xhookrightarrow{\rho} \omega V$ of finite weighted sets with least squares solution $\mathfrak{a}_V$ on  $\omega V$. The map $\rho^*$ from \ref{EQ ring map restrict} still applies on the level of rings, 
\begin{equation}
\mathcal{R}^{\omega V}_{\mathfrak{a}} \xrightarrow{\rho^*} \mathcal{R}^{\omega U}_{\mathfrak{a}},
\end{equation}
and so it applies on the level of graded modules over these rings. However, since $\mathfrak{a}$ is likely not a LS solution on $\omega U$, then the induced map will not be a chain map between the type of linearized Koszul complexes from definition \ref{DEF local Koszul}. Simply call the latter complex $\restr{K_{\bullet}(\mathcal{R}^{\omega\mathscr{D}}_{\mathfrak{a}})}{\omega U}$ with the understanding that the differential here is just given by applying $\rho^*$ to the components (ie ring elemnts) $\eta^i_{\mathfrak{a}}$, and then $\rho^*$ induces a chain map,
\begin{equation}\label{EQ restr local}
K_{\bullet}(\mathcal{R}^{\omega V}_{\mathfrak{a}}) \xrightarrow{\rho^*} \restr{K_{\bullet}(\mathcal{R}^{\omega V}_{\mathfrak{a}})}{\omega U}.
\end{equation}
Conveniently, these restrictions of linearized complexes are chain isomorphic to the chosen linearized complexes by translation as is now described.

On $\mathcal{R}^{\omega \mathscr{D}}$, define the assignment $\left(a, \omega\right) \xmapsto{\tau_{\mathfrak{a}, \mathfrak{b}}} \left(a- (\mathfrak{a} - \mathfrak{b}), \omega\right)$ which sends generators to polynomials so it induces a ring endomorphism. It follows that on generators of the ideal $\mathfrak{I}_\mathfrak{b}$,
\[ (a^i - \mathfrak{b}^i) \xmapsto{\tau_{\mathfrak{a}, \mathfrak{b}}}  \left(a^i- (\mathfrak{a}^i - \mathfrak{b}^i) \right) -  \mathfrak{b}^i= (a^i - \mathfrak{a}^i),\]
so that $\tau_{\mathfrak{a}, \mathfrak{b}}(\mathfrak{I}_\mathfrak{b}) \subset \mathfrak{I}_\mathfrak{a}$, and in particular $\tau_{\mathfrak{a}, \mathfrak{b}}(\mathfrak{I}^2_\mathfrak{b}) \subset \mathfrak{I}^2_\mathfrak{a}$. Since the inverse is $\tau_{\mathfrak{b}, \mathfrak{a}}$ then translation induces a ring isomorphism $\tau_{\mathfrak{a}, \mathfrak{b}}: \mathcal{R}^{\omega \mathscr{D}}_{\mathfrak{b}} \xrightarrow{\cong} \mathcal{R}^{\omega \mathscr{D}}_{\mathfrak{a}}$.
\begin{lemma}\label{LEM tau chain iso}
The translation map $\tau_{\mathfrak{a}, \mathfrak{b}}$ described above induces a chain isomorphism 
\begin{equation}\label{EQ tau chain iso}
\tau_{\mathfrak{b}, \mathfrak{a}}: K_{\bullet}(\mathcal{R}^{\omega \mathscr{D}}_{\mathfrak{a}})\xrightarrow{\cong} K_{\bullet}(\mathcal{R}^{\omega \mathscr{D}}_{\mathfrak{b}}).
\end{equation}
\begin{proof}
The discussion above the lemma shows how $\tau_{\mathfrak{b}, \mathfrak{a}}: \mathcal{R}^{\omega \mathscr{D}}_{\mathfrak{a}} \xrightarrow{\cong} \mathcal{R}^{\omega \mathscr{D}}_{\mathfrak{b}}$ is a ring isomorphism between the rings from which the complexes in \eqref{EQ tau chain iso} are built and so the map is an isomorphism of graded modules. All that is left is to show that the map commutes with differentials. It is sufficient to check that $\tau_{\mathfrak{b}, \mathfrak{a}}$ maps $\eta^i_{\mathfrak{a}}$ to $\eta^i_{\mathfrak{b}}$:
\begin{align*}
\tau_{\mathfrak{b}, \mathfrak{a}}\left(\eta^i_{\mathfrak{a}} (a, \omega)\right) &= \tau_{\mathfrak{b}, \mathfrak{a}}\left(N^i(\omega) \cdot (a^{\bullet} - \mathfrak{a}^{\bullet}) \right)= N^i(\omega) \cdot \tau_{\mathfrak{b}, \mathfrak{a}}(a^{\bullet} - \mathfrak{a}^{\bullet})\\
&= N^i(\omega) \cdot (a^{\bullet} - \mathfrak{b}^{\bullet})= \eta^i_{\mathfrak{b}} (a, \omega).
\end{align*}
\end{proof}
\end{lemma}  
The idea now is to cover $\omega \mathscr{D}$ by more than just weighted finite subsets, but additionally by (choices of) LS solutions on each (higher) intersection of covering sets. One can think about this as choosing ``local coordinates'' on each portion of the data set and such a construction is itself a simplicial presheaf. 
An approach similar to ours in \cite{GTZ} can be taken where these choices of parameters form their own simplicial presheaf, $\mathcal{A}: \catWFin^{op} \to \sSet$, where an $n$-simplex over a weighted data set $\omega \mathscr{D}$ is just\footnote{This construction is the $0$-coskeleton of the assignment of LS solutions to data sets.} $(n+1)$-many choices of model parameters $\overline{\mathfrak{a}}= (\mathfrak{a}_0, \ldots, \mathfrak{a}_n)$ over $\mathscr{D}$. A map $\rho: \omega U \hookrightarrow \omega V$ in $\catWFin$ applies the identity to the $n$-simplices of $\mathcal{A}$: $\rho^*(\overline{\mathfrak{a}}) = \overline{\mathfrak{a}}$. Then thinking of the nerve of chain complexes over the rings $\mathcal{R}^{\omega (-)}$ as another simplicial presheaf $\mathcal{N} \Ch: \catWFin^{op} \to \sSet$, the linearized Koszul complexes $K_{\bullet}(\mathcal{R}^{\omega\mathscr{D}}_{\mathfrak{a}})$ assemble into a natural transformation.
\begin{theorem}\label{THM K lin}
The assignment of each choice of LS solution $\mathfrak{a}$ over a weighted finite data set $\omega \mathscr{D}$ to the linearized Koszul complexes $K_{\bullet}(\mathcal{R}^{\omega\mathscr{D}}_{\mathfrak{a}})$, forms a map of simplicial presheaves $\mathcal{K}^{loc}: \mathcal{A} \to  \mathcal{N} \Ch$. 
\begin{proof}
First for each object $\omega\mathscr{D}$ an $n$-simplex in $\mathcal{A}(\omega\mathscr{D})$, given by $(n+1)$-many parameter choices $\overline{\mathfrak{a}}$ on $\mathscr{D}$ needs to be assigned to an $n$-simplex $\mathcal{K}^{lin}(\overline{\mathfrak{a}})$ in the nerve of chain complexes. The assignment is to send each vertex $\mathfrak{a}_i$ to the linearized Koszul complex $K_{\bullet}(\mathcal{R}^{\omega\mathscr{D}})_{\mathfrak{a}_i}$ and pairs of vertices $\mathfrak{a}_i\to \mathfrak{a}_j$ to the chain isomorphism from lemma \ref{LEM tau chain iso} given by translation. Next, $\mathcal{K}^{lin}$ needs to assign for each inclusion $\rho: \omega U \hookrightarrow \omega V$ a map of simplicial sets $\rho^*: \mathcal{K}^{lin}(\omega V) \to \mathcal{K}^{lin}(\omega U)$ which is essentially given by restriction \eqref{EQ restr local} and translation \eqref{EQ tau chain iso}. 
\end{proof} 
\end{theorem}

Covering a weighted finite data set $\omega \mathscr{D}$ with a collection of sub weighted finite data sets and choices of LS solutions $\mathcal{U}= \{ (\omega U_i, \mathfrak{a}_i)\}_{i \in I}$, there is a natural bicomplex by way of the presheaf discussion above. Evaluating the presheaf $\mathcal{K}^{lin}$ on the \v{C}ech nerve of the cover $\Cech \mathcal{U}$, produces a comsimplicial simplicial set whose totalization is a natural \emph{\v{C}ech-Koszul LS complex}. 

A $0$-cocycle in this complex (after some re-grading) would consist of:
\begin{itemize}
\item A choice of polynomial $p_i(a, \omega) \mod \mod \mathfrak{I}^2$ in $K_{0}\left( \mathcal{R}^{\omega U_i} \right)_{\mathfrak{a}_i}$ on each $\omega U_i$, 
\item A degree $1$ element $q_{ij} \in K_{1}\left( \mathcal{R}^{\omega U_{ij}} \right)_{\mathfrak{a}_{ij}}$ on each $\omega U_{ij}$ satisfying, 
\[ \iota q_{ij} =\tau_{\mathfrak{a}_{ij}, \mathfrak{a}_j} \left(  \restr{p_j}{U_{ij}}\right) -\tau_{\mathfrak{a}_{ij}, \mathfrak{a}_i} \left(  \restr{p_i}{U_{ij}}\right),\]
\item A degree $2$ element $r_{ijk} \in K_{2}\left( \mathcal{R}^{\omega U_{ijk}} \right)_{\mathfrak{a}_{ijk}}$ on each $\omega U_{ijk}$ whose image under the differential similarly witnesses higher disrepencies of $q_{ij}$'s, and so on. 
\end{itemize}
\section{A toy example}
This section extracts the core data from the previous description of a total degree $0$ \v{C}ech-Koszul cocycle and applies it to an actual data set with least squares solutions on local charts. The \v{C}ech-Koszul degree $(1,1)$ element $\beta_{12}$ constructed below is the homotopy between the first-order information provided by the local least squares solutions $\mathfrak{a}_{i}$.

Consider the data set 
\[\mathscr{D}=\{(x_i, y_i)\}_{i=1}^5=  \{ (-4, 2), (-1,1), (1,2), (2,4), (5,6)\}\]
and cover it by two sub sets $\mathscr{D}_1: = \{(x_i, y_i)\}_{i=1}^4$ and $\mathscr{D}_2: = \{(x_i, y_i)\}_{i=2}^5$ and label their intersection  $\mathscr{D}_{1,2}$. Using the model $y = mx + b$ so that using the notation above $a = (m,b)$ and $n=2$. 

Choose the LS solutions $\mathfrak{a}_1 = (\mathfrak{m}_1, \mathfrak{b}_1) = (11/42, 50/21)$, $\mathfrak{a}_2 = (\mathfrak{m}_2, \mathfrak{b}_2) = (13/15, 26/15)$, and $\mathfrak{a}_{1,2} = (\mathfrak{m}_{1,2}, \mathfrak{b}_{1,2}) = (13/14, 12/7)$. Note the differences $\mathfrak{a}_1 - \mathfrak{a}_{1,2} = (-2/3, 2/3)$ and $\mathfrak{a}_2 - \mathfrak{a}_{1,2} = (-13/210, 2/105)$.

Compute the weighted normal equations (\eqref{EQ LS normal m} and \eqref{EQ LS normal b}) on $\mathscr{D}_{1,2}$,
\begin{align*}
\eta^m_{12}&= -2 \sum\limits_{i=2}^4 \left( y^i - (mx^i + b)\right)\cdot x^i \omega^i\\
&= -2 \left( (1-(m\cdot (-1) + b)\cdot (-1) \omega^2 +(2-(m\cdot (1) + b)\cdot (1) \omega^3+ (4-(m\cdot (2) + b)\cdot (2) \omega^4 \right) \\
&= -2 \left( (-1-m + b)\omega^2 +(2-m- b) \omega^3+ (8-4m-2b) \omega^4 \right) \\
&= -2 \left( \left(- \omega^2 + 2 \omega^3 + 8 \omega^4 \right) + \left( -\omega^2- \omega^3-4 \omega^4\right)\cdot m + \left( \omega^2-\omega^3-2 \omega^4\right) \cdot b \right) \\
\text{while} \quad \eta^b_{12}&= -2 \sum\limits_{i=2}^4 \left( y^i - (mx^i + b)\right)\cdot \omega^i\\
&= -2 \left( (1-(m\cdot (-1) + b) \omega^2 +(2-(m\cdot (1) + b) \omega^3+ (4-(m\cdot (2) + b)\omega^4 \right) \\
&= -2 \left( (1+m- b) \omega^2 +(2-m- b) \omega^3+ (4-2m- b)\omega^4 \right) \\
&= -2 \left( \left(\omega^2 + 2 \omega^3 + 4\omega^4 \right) + \left( \omega^2- \omega^3-2 \omega^4\right)\cdot m + \left(- \omega^2-\omega^3- \omega^4\right) \cdot b \right) \\
\end{align*}
Recall from \eqref{EQ linearize polyn} and the ensuing discussion around $\eta^i$ linearized at a parameter choice, that the linear-in-parameters part is given by the Jacobian of $\eta$ (Hessian of the least squares functional), 
\begin{align}
 N&=  \begin{pmatrix} \partial \eta^m / \partial m & \partial \eta^m / \partial b \\ \partial \eta^b / \partial m & \partial \eta^b / \partial b\end{pmatrix}=- 2 \begin{pmatrix} -\omega^2- \omega^3-4 \omega^4 & \omega^2-\omega^3-2 \omega^4\\  \omega^2- \omega^3-2 \omega^4 & - \omega^2-\omega^3- \omega^4\end{pmatrix}\nonumber \\
 \intertext{and now plugging in weight values of $1$ for each $x_i$ in the intersection data set:}
N &= -2 \begin{pmatrix} -6 & -2\\ -2 & -3\end{pmatrix}= \begin{pmatrix} 12 & 4\\ 4 & 6\end{pmatrix}, \label{EQ toy N} \Rightarrow N^{-1}= \frac{1}{56}  \begin{pmatrix} 6 & -4\\ -4 & 12\end{pmatrix}.
 \end{align}

Thus in the localized setting, the components of the Koszul differential are given by the formula, 
\begin{equation}\label{EQ components eta toy}
\begin{pmatrix} \eta^m_{12} \\ \eta^b_{12} \end{pmatrix} = N \cdot \begin{pmatrix} m - \mathfrak{m}_{12} \\ b - \mathfrak{b}_{12} \end{pmatrix}  \mod \mathfrak{I}^2_{\mathfrak{a}} =\begin{pmatrix} 12 & 4\\ 4 & 6\end{pmatrix} \cdot \begin{pmatrix} m - \mathfrak{m}_{12} \\ b - \mathfrak{b}_{12} \end{pmatrix}  \mod \mathfrak{I}^2_{\mathfrak{a}}.
\end{equation}

The goal is to interpret the discrepancy between $\mathfrak{a}_1$ and $\mathfrak{a}_2$ as an element in $K_{0}\left( \mathcal{R}^{\omega \mathscr{D}_{12}} \right)_{\mathfrak{a}_{12}}$ and find $\beta_{12} \in K_{1}\left( \mathcal{R}^{\omega \mathscr{D}_{12}} \right)_{\mathfrak{a}_{12}}$ which ``witnesses the discrepancy'' in the sense that the differential applied to $\beta_{12}$ is that discrepancy. 

Let $\delta_{12}:= \mathfrak{a}_2 - \mathfrak{a}_1=(127/210,-68/105)  $, naively compute 
\begin{equation}
\begin{pmatrix} \beta_{12}^m \\  \beta_{12}^b \end{pmatrix}:= N^{-1}\delta_{12} = \begin{pmatrix}653/5880 \\  -1070/5880\end{pmatrix},
\end{equation}
and interpret this as the element $\beta_{12} =  \beta_{12}^m e^m +  \beta_{12}^b e^b \in K_{1}\left( \mathcal{R}^{\omega \mathscr{D}_{12}} \right)_{\mathfrak{a}_{12}}$. Applying the Koszul differential \eqref{EQ Koszul d} to this element, using the explicit components \eqref{EQ components eta toy} for this example yields
\begin{align*}
\iota\left(  \beta_{12}^m e^m +  \beta_{12}^b e^b\right) &=  \beta_{12}^m \cdot \eta_{12}^m +  \beta_{12}^b \eta_{12}^b\\
&= \beta_{12}^m \cdot \left( 12(m - \mathfrak{m}_{12}) +4 \cdot (b - \mathfrak{b}_{12})  \right)+ \beta_{12}^b \cdot \left( 4(m - \mathfrak{m}_{12}) +6 \cdot (b - \mathfrak{b}_{12})  \right)\\
&= \frac{127}{210} (m - \mathfrak{m}_{12}) - \frac{68}{105}(b - \mathfrak{b}_{12})= \delta_{12}^T \cdot \begin{pmatrix} m - \mathfrak{m}_{12} \\ b - \mathfrak{b}_{12} \end{pmatrix}= \delta^T \cdot \left( a - \mathfrak{a}_{12}\right) .
\end{align*}
From the above calculation, it becomes clearer how the higher terms in the Koszul complex track the discrepancies in the directions of the parameters using the normal equations. 

Letting $\alpha_{i}:= \mathfrak{a}_i^T \cdot (a - \mathfrak{a}_i)\in K_{0}\left( \mathcal{R}^{\omega \mathscr{D}_{i}}_{\mathfrak{a}_{i}}\right)$ for $i=1,2$ then $\Delta_{12}$ becomes the \v{C}ech differential applied to $\alpha_{\bullet}$. The sum $\alpha_{\bullet} + \beta_{12}$ is, in this toy example, a total degree $0$ cocycle in the \v{C}ech-Koszul total complex described after theorem \ref{THM K lin}.


\begin{thebibliography}{GMTZ99}
\bibitem[Ei]{Ei}Eisenbud, \emph{Title: Commutative Algebra: with a View Toward Algebraic Geometry}, Graduate Texts in Mathematics No 150, Springer-Verlag New York, Inc. 1995.

\bibitem[Va]{Va} V. Vapnik, \emph{The Nature of Statistical Learning Theory}, Springer, 2nd ed., 2000.

\bibitem[Bi]{Bishop} C.M. Bishop. \emph{Pattern Recognition and Machine Learning}. Springer, 2006.

\bibitem[GZ]{GZ} C. Glass and M. Zeinalian, \emph{Toledo-Tong \v{C}ech parametrix as a conjugacy invariant for subgroups of
biholomorphisms}, preprint.

\bibitem[GTZ]{GTZ} C. Glass, T. Tradler, and M. Zeinalian \emph{Hirzebruch-Riemann-Roch for complex analytic infinity-prestacks}, \url{https://arxiv.org/abs/2601.12454}. (2026)

\bibitem[TT]{TT} D. Toledo and Y.L. Tong. \emph{A parametrix for $\overline{\del}$ and Riemann-Roch in \v{C}ech theory}. Topology (1976) Vol. 15, 273-301.
\end{thebibliography}
\end{document}